\documentclass{amsart}
\usepackage{graphicx,amsmath,amsfonts,amssymb,amsthm,mathtools,fullpage,url} 
\usepackage[dvipsnames]{xcolor}

\counterwithin{table}{section}
\counterwithin{figure}{section}

\theoremstyle{plain}
\newtheorem{theorem}{Theorem}[section]

\newtheorem{proposition}[theorem]{Proposition}
\theoremstyle{definition}
\newtheorem{definition}[theorem]{Definition}

\newtheorem{remark}[theorem]{Remark}

\newtheorem{conjecture}[theorem]{Conjecture}

\DeclareMathOperator{\Trace}{trd}
\DeclareMathOperator{\Norm}{nrd}
\DeclareMathOperator{\Gal}{Gal}

\DeclareMathOperator{\Discr}{Discr}
\DeclareMathOperator{\rank}{rank}

\def\Spec{\operatorname{Spec}}
\def\End{\operatorname{End}}

\def\Im{\operatorname{Im}}

\renewcommand{\phi}{\varphi}

\title{Minimal degree of an isogeny between a supersingular elliptic curve and its conjugate}

\author[Aubry]{Yves Aubry}
\address[Aubry]{Institut de Math\'ematiques de Toulon - IMATH, Universit\'e de Toulon, France}
\email{yves.aubry@univ-tln.fr}

\address[Aubry]{Institut de Math\'ematiques de Marseille - I2M, Aix Marseille Univ, UMR 7373 CNRS, France}
\email{yves.aubry@univ-amu.fr}

\address[Aubry]{Laboratoire GAATI, University of French Polynesia}
\email{yves.aubry@gaati.org}

\author[Oyono]{Roger Oyono}
\address[Oyono]{Laboratoire GAATI, University of French Polynesia}
\email{roger.oyono@upf.pf}

\author[Vincent]{Christelle Vincent}
\address[Vincent]{University of Vermont, Burlington, United States of America}
\email{christelle.vincent@uvm.edu}

\begin{document}

\begin{abstract}
Let $E$ be a supersingular elliptic curve defined over $\overline{\mathbb{F}}_p$ and $E^{(p)}$ be its conjugate. We give a bound on the minimal degree of an isogeny from $E$ to $E^{(p)}$ depending on $p$, and show that this bound is both asymptotically optimal as well as sharp in many cases. This bound is obtained by developing a new technique to compute the degree of certain isogenies from a supersingular elliptic curve to its conjugate, and we present extensive computations of the successive minima of the lattice containing these isogenies. Following this, we give several conjectures supported by the data we have obtained, including some on the set of primes $p$ for which the bound we give in this article is attained.
\end{abstract}

\subjclass[2020]{11G20, 11R52, 14H52, 14K02, 14Q05} 

\keywords{Supersingular elliptic curves, isogenies, quaternion algebras, Gross lattice.}

\thanks{The first and last author are grateful for the hospitality of the GAATI laboratory at the Universit\'{e} de Polyn\'{e}sie fran\c{c}aise, where they completed this work.
Aubry is partially supported by the French Agence Nationale de la Recherche through the Barracuda project under Contract ANR-21-CE39-0009-BARRACUDA.
Oyono is partially supported by a grant of STIC-AmSud entitled Crypto4All, number 24-STIC-04.
Vincent is supported by a Simons Foundation Travel Support for Mathematicians grant.}

\date{\today}

\maketitle

\section{Introduction}
Let $p$ be a prime, $\mathbb{F}_p$ be the finite field with $p$ elements, and $\overline{\mathbb{F}}_p$ be an algebraic closure of $\mathbb{F}_p$. 
Throughout, we write $E$ for a supersingular elliptic curve defined over $\overline{\mathbb{F}}_p$, and $E^{(p)}$ for its $\Gal(\mathbb{F}_{p^2}/\mathbb{F}_p)$-conjugate. In other words, if $\varphi(z)=z^p$ is the Frobenius homomorphism of $\mathbb{F}_p$,
$$E^{(p)}:=E\otimes_{\Spec(\mathbb{F}_p), \varphi}\Spec(\mathbb{F}_p).$$

In this article, we study the degrees of isogenies $E \to E^{(p)}$. In the simplest case, if $j(E) \in \mathbb{F}_p$, there is an isogeny $E \to E^{(p)}$ of degree $1$: $E$ and $E^{(p)}$ are isomorphic. In general, if $j(E) \in \overline{\mathbb{F}}_p \setminus \mathbb{F}_p$, it is natural to ask for the smallest integer $d$ such that there is an isogeny $E \to E^{(p)}$ of degree $d$; we denote this least value of $d$ by $\delta_E$ here.  
Specifically, we prove: 

\begin{theorem}\label{thm:main}
Let $p$ be a prime number and $E$ be a supersingular elliptic curve defined over 
$\overline{\mathbb F}_p$.
Then there exists an  isogeny from $E$ to $E^{(p)}$ of degree less than or equal to $\sqrt[3]{\frac{p}{2}}$.
\end{theorem}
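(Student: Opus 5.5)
The plan is to view $\mathrm{Hom}(E,E^{(p)})$ as a positive definite quadratic lattice of rank $4$ under the degree form, and to look for a short vector in a well-chosen rank-$3$ sublattice. The rank-$4$ lattice alone is not enough. Hermite's bound in dimension $4$ gives a minimum of order $\sqrt{p}$, since that lattice has determinant $p^2/16$ like any $\mathrm{Hom}$ between supersingular curves. The cube root must come from dropping to dimension $3$.

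\emph{Setup.} By the usual reduction we may assume $E$ is defined over $\mathbb{F}_{p^2}$. Write $\pi\colon E\to E^{(p)}$ for the $p$-power Frobenius isogeny, of degree $p$, and $\hat\pi$ for its dual. Set $L=\mathrm{Hom}(E,E^{(p)})$ with quadratic form $\deg$ and associated bilinear form $\langle f,g\rangle=\tfrac12\Trace(\hat g f)$. Composing with $\hat\pi$ gives an injection $L\hookrightarrow \End(E)=\mathcal O$, $f\mapsto \hat\pi f$, and this multiplies the norm by $p$. The image lies in the unique two-sided ideal $\mathfrak P$ of $\mathcal O$ of reduced norm $p$, because $\hat\pi f$ factors through an inseparable isogeny. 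Since every element of $\mathfrak P$ has trace divisible by $p$, we get
\[
\langle \pi,f\rangle=\tfrac12\Trace(\hat\pi f)\in \tfrac p2\mathbb Z \quad\text{for all } f\in L .
\]
So $\pi$ is a vector of norm $p$ whose pairings with all of $L$ are unusually divisible.

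\emph{The rank-$3$ sublattice.} Next consider $L_0=\pi^{\perp}\cap L=\{f\in L:\Trace(\hat\pi f)=0\}$, a rank-$3$ lattice whose nonzero vectors are isogenies $E\to E^{(p)}$. For a primitive vector $\pi$ we have the formula
\[
\det(L_0)=\frac{\deg(\pi)\,\det(L)}{[L:\mathbb Z\pi\oplus L_0]^2}.
\]
The index $[L:\mathbb Z\pi\oplus L_0]$ equals $\deg(\pi)/g$, where $g$ generates the ideal $\{\langle \pi,f\rangle : f\in L\}$. From the divisibility above, $g$ is $p/2$ or $p$. Plugging in $\det(L)=p^2/16$ and $\deg\pi=p$, I expect to obtain $\det(L_0)\le p/4$ (with the Gram determinant normalized so that $\deg$ is the form). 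I would verify this carefully, including the $2$-adic factors, by computing with an explicit basis of $\mathcal O$ and of $\mathfrak P$. This step has two parts: pinning down $\det(L)$ in the correct normalization, and showing that the trace ideal is exactly $p\mathbb Z$ (or $\tfrac p2\mathbb Z$ after halving). I expect this to be the main obstacle, and the factor $2$ in the final bound hinges on it.

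\emph{Conclusion.} Hermite's theorem in dimension $3$, with $\gamma_3^3=2$, gives a nonzero $f\in L_0$ satisfying
\[
\deg(f)^3\le \gamma_3^3\det(L_0)\le 2\cdot \frac p4=\frac p2 .
\]
Hence $\deg f\le \sqrt[3]{p/2}$, and $f$ is the required isogeny $E\to E^{(p)}$. If $j(E)\in\mathbb F_p$ the statement is trivial, since $E$ and $E^{(p)}$ are isomorphic. The argument above does not need this case distinction, but it is a useful sanity check on the normalizations: the identity-type isogeny should indeed lie in $L_0$.
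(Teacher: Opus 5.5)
Your strategy works, but the determinant formula you display is inverted, and that inversion is what makes the step you call the ``main obstacle'' look hard. Since $\mathbb Z\pi\oplus L_0$ is an orthogonal sum of index $k=\deg(\pi)/g$ in $L$, you have $\deg(\pi)\det(L_0)=\det(\mathbb Z\pi\oplus L_0)=k^2\det(L)$. Hence
\[
\det(L_0)=\frac{k^2\det(L)}{\deg(\pi)}=\frac{\deg(\pi)\det(L)}{g^2}=\frac{p^3}{16\,g^2}.
\]
Your formula instead gives $g^2\det(L)/\deg(\pi)=g^2p/16$, which is $p^3/64$ for $g=p/2$ and proves nothing.

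With the corrected formula you need no further information about $g$. You already showed $\langle\pi,L\rangle\subseteq\frac p2\mathbb Z$, and $p=\langle\pi,\pi\rangle\in g\mathbb Z$, so $g\in\{p/2,p\}$. Then $\det(L_0)\in\{p/4,p/16\}$, since a larger $g$ only lowers $\det(L_0)$. In fact $4\det(L_0)\in\mathbb Z$ because $\deg$ is an integral ternary form, which rules out $p/16$, so $\det(L_0)=p/4$ exactly. The value $\det(L)=p^2/16$ is standard: $\mathrm{Hom}(E,E^{(p)})$ is an invertible ideal for maximal orders of reduced discriminant $p$, and the degree is the reduced norm divided by the ideal norm. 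After this fix, Hermite's bound with $\gamma_3^3=2$ gives the theorem.

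Once repaired, your proof is correct and reaches the paper's final inequality by a different road. Your $L_0$ is exactly the lattice $\mathcal R$ of Section~\ref{sec:sieving}, whose degree form the paper identifies as $Q/(4p)$ with discriminant $p/4$. Your last step is the paper's use of Cassels' Theorem III.

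The paper, however, gets there indirectly:
\begin{enumerate}
\item It starts from the Gross lattice $\mathcal O^T$ with $\det G=4p^2$.
\item It writes the determinant of a rank-$2$ sublattice in Plücker coordinates as the ternary form $Q$ and computes $\Discr(Q)=\det(G)^2=16p^4$ by a ``tedious calculation''.
\item It invokes the Aubry--Vincent correspondence (Theorem~\ref{thm:AV}) to turn a sublattice of determinant $4pn$ into a trace-zero endomorphism of degree $pn$, and hence an isogeny of degree $n$.
\end{enumerate}
Your argument gets $\det(L_0)=p/4$ intrinsically, from $\det \mathrm{Hom}(E,E^{(p)})$ and the $\frac p2$-divisibility of $\langle\pi,\cdot\rangle$. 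It therefore produces the isogeny directly and bypasses the Gross lattice, the discriminant identity and Aubry--Vincent altogether.

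What the paper's route buys is that everything is expressed through the Gross-lattice invariants $D_i,t_{ij}$ that drive its computations and sieve. It also yields Proposition~\ref{prop:rank2sublattice}: for $p>11$ the minimal isogeny lies in this rank-$3$ lattice. That fact is needed to compute $\delta_E$ exactly, but not for the existence bound you are proving.
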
  

This theorem will follow immediately from Theorem \ref{better_bound}, and we will show that this bound is sharp in many cases. It is worth noticing that, by the discussion at the beginning of Section \ref{sec:Gross}, Theorem \ref{thm:main} is equivalent to the statement that if $E$ is a supersingular elliptic curve defined over $\overline{\mathbb{F}}_p$, $E$ has an inseparable endomorphism of degree less than or equal to $\sqrt[3]{\frac{p^4}{2}}$.

Our study of the degree of isogenies $E \to E^{(p)}$ leads us naturally to define the quantity
\begin{equation*}
\delta(p) = \max_{E \text{ ss}/ \overline{\mathbb{F}}_p} \left(\min_{\phi \colon E \to E^{(p)}} \deg(\phi) \right) = \max_{E \text{ ss}/ \overline{\mathbb{F}}_p}  \delta_E ,
\end{equation*}
where here $E$ ranges over the set of isomorphism classes of supersingular elliptic curves defined over $\overline{\mathbb{F}}_p$ and for each such curve $E$, $\phi$ ranges over the set of isogenies from $E$ to $E^{(p)}$. Our method of proof allows us to compute $\delta(p)$ for any prime $p$, and this article presents the extensive data we have obtained as well as the methods we have used to obtain them. These data have led us to formulate several conjectures, which conclude our article.

More precisely, the article is organized as follows: We begin in Section \ref{sec:preliminaries} by presenting the background necessary to our article as well as work done on related topics.  Before we can present the proof of Theorem \ref{thm:main}, we establish a connection between the isogenies $E \to E^{(p)}$ of small degree and the inseparable endomorphisms of $E$ of trace $0$ in Section \ref{sec:Gross}. Then in Section \ref{sec:minimal}, we use the relation between inseparable endomorphisms of trace $0$ of supersingular elliptic curves and the rank-$2$ sublattices of the Gross lattice of such curves established in \cite{AubryVincent} as well as a general theorem on elements represented by definite quadratic forms to prove Theorem \ref{thm:main}. Additionally we show that the exponent of $1/3$ in the bound that we obtain is asymptotically sharp, and that the bound of $\lfloor \sqrt[3]{\frac{p}{2}}\rfloor$ is sharp in several cases; this will give rise to the notion of \emph{wisde primes} presented in Definition \ref{def:wisde}. Then Section \ref{sec:methods} presents details on the computation of the quantity $\delta(p)$ as well as some tables containing salient data, and finally Section \ref{sec:data} delves deeper into the data we have obtained as well as presents the conjectures they suggest.

\section{Definitions and preliminaries}\label{sec:preliminaries}

\subsection{Related previous works}
Our work is related to a question posed in \cite{DLX}, where the authors study the least (nontrivial) degree of an isogeny $E \to E'$ for $E$ and $E'$ two elliptic curves defined over $\overline{\mathbb{F}}_p$.  
Here by ``nontrivial," we mean that isogenies of degree $1$ are excluded from consideration. 
 They show that if $E$ is an elliptic curve defined over a finite field (ordinary or supersingular), $E$ has a nontrivial endomorphism of degree less than or equal to $4$. This contrasts with the case of \emph{inseparable} endomorphisms of a supersingular elliptic curve considered here, as Remark \ref{rem:Yang} implies that the best upper bound on the least degree of an inseparable endomorphism grows like $p^{4/3}$. 
 
Turning our attention to isogenies from $E$ to its conjugate $E^{(p)}$, specifically in the supersingular case, one bound on the minimal degree can be obtained using the diameter of the $\ell$-isogeny graph of supersingular elliptic curves, where $\ell$ is a prime different from $p$. Since this diameter is on the order of $\log(p)$ by \cite[Theorem 79]{kohel}, this gives an upper bound on the least degree of an isogeny from $E$ to $E^{(p)}$ that is on the order of $p$. (Recall that if the distance between two $j$-invariants $j_1$ and $j_2$ in the $\ell$-isogeny graph of supersingular elliptic curves is $d$, then there exists an isogeny of degree $\ell^d$  between the corresponding curves.) Of course this bound is not specific to pairs of curves that are Galois conjugates, and is restricted to isogenies that are of degree given by a power of a fixed prime. It is therefore to be expected that it would be much larger than the bound obtained here. To support this expectation, experiments presented in \cite[Section 4.1]{ACLLNSS} suggest that in the $2$-isogeny graph, conjugate pairs are closer to each other than arbitrary pairs of supersingular elliptic curves. Relatedly, \cite[Theorem 3.9]{EHLMP} gives a lower bound on the number of supersingular elliptic curves defined over $\overline{\mathbb{F}}_p$ that are adjacent to their Galois conjugate in the supersingular $\ell$-isogeny graph when $\ell < \frac{p}{4}$.
 
In the setting of number fields, the main theorem of \cite{MW} asserts that if two elliptic curves $E$ and $E'$ defined over a number field are isogenous, then there exists an isogeny between them whose degree is bounded by $c \cdot w(E)^4$, where $w(E)$ denotes a height of the curve $E$ and $c$ is a constant depending only on the base field.

\subsection{Background}

We now turn our attention to giving some basic facts on supersingular elliptic curves. If $p$ is a prime and $E$ is a supersingular elliptic curve defined over $\overline{\mathbb{F}}_p$, we have that its $j$-invariant $j(E)$ belongs to $\mathbb{F}_{p^2}$. In particular, there are only finitely many $\overline{\mathbb{F}}_p$-isomorphism classes of supersingular elliptic curves for a given prime $p$. As stated in the introduction, a quantity of interest in this article is the least degree $\delta_E$ of an isogeny between a supersingular elliptic curve $E$ and its conjugate $E^{(p)}$, and we take the opportunity to give a formal definition here:

\begin{definition}\label{def:deltaE}
Let $p$ be a prime and $E$ be a supersingular elliptic curve defined over $\overline{\mathbb{F}}_p$. Then we define the value
\begin{equation*}
\delta_E = \min_{\phi \colon E \to E^{(p)} } \deg(\phi),
\end{equation*}
where the minimum is taken over all isogenies $\phi \colon E \to E^{(p)}$ from $E$ to its $\Gal(\mathbb{F}_{p^2}/\mathbb{F}_p)$-conjugate $E^{(p)}$.
\end{definition}

For all primes $p$, there exist supersingular elliptic curves with $j$-invariant in $\mathbb{F}_p$; again as stated in the introduction such curves are isomorphic to their conjugate and for those curves $\delta_E = 1$.

Let $B_{p,\infty}$ be the quaternion algebra over $\mathbb{Q}$ ramified at $p$ and $\infty$.
Then if $E$ is a supersingular elliptic curve defined over $\overline{\mathbb{F}}_p$, the geometric endomorphism ring of $E$, which we denote $\End(E)$, is isomorphic to a maximal order $\mathcal{O}$ of $B_{p,\infty}$. 
Letting $\overline{\cdot}$ be the conjugation involution on $B_{p,\infty}$, we define the reduced trace $\Trace(x) = x+\overline{x}$ and the reduced norm $\Norm(x) = x \overline{x}$ for $x \in B_{p,\infty}$ as well as an inner product $\frac{1}{2}\Trace(x\overline{y})$ for $x,y \in B_{p,\infty}$. 
To simplify the notation, for $\alpha \in \End(E)$ we also write $\Trace(\alpha)$ for its trace and $\Norm(\alpha)$ for its degree, as they are equal to the reduced trace, respectively the reduced norm, of the image of $\alpha$ under any isomorphism $\End(E) \otimes \mathbb{Q} \cong B_{p,\infty}$. We note that in this article, our computations take place in $B_{p,\infty}$; given a supersingular elliptic curve, obtaining an explicit endomorphism from an element of $B_{p,\infty}$ requires more work which is not necessary here as we are concerned with questions of existence. We give one explicit example in Section \ref{sec:example}.

Of importance in this article will be the \emph{Gross lattice} \cite{Gross} of a supersingular elliptic curve; we refer the reader to \cite{Goren-Love-short} and \cite{HKTV} for further reading on this topic. Let $\tau \colon B_{p,\infty} \to B_{p,\infty}$ be given by $\tau(x) = 2x - \Trace(x)$. Then for $\mathcal{O}$ an order in $B_{p,\infty}$, we write $\mathcal{O}^T$ for the image of $\mathcal{O}$ under $\tau$ and call $\mathcal{O}^T$ the \textbf{Gross lattice of $\mathcal{O}$}. If $E$ is a supersingular elliptic curve defined over $\overline{\mathbb{F}}_p$ with endomorphism ring isomorphic to a maximal order $\mathcal{O}$ of $B_{p,\infty}$, we say that $\mathcal{O}^T$ is the \textbf{Gross lattice of $E$}.

If $\mathcal{O}$ is an order in $B_{p,\infty}$, $\mathcal{O}^T$ is a lattice of rank $3$ contained in the lattice of elements of $\mathcal{O}$ of trace $0$. We define the \textbf{$i$th successive minimum} of a lattice $L$ in $B_{p,\infty}$ to be the least quantity $D_i$ such that
\begin{equation*}
 \rank \{ x \in L : \Norm(x) \leq D_i \} \geq i.
\end{equation*}
Since $\mathcal{O}^T$ is of rank $3$, it has a $\mathbb{Z}$-basis $\{\beta_1,\beta_2,\beta_3\}$ such that $\Norm(\beta_i) = D_i$; we call such a basis of $\mathcal{O}^T$ a \textbf{successive minimal basis}. If $L$ is any lattice of rank $3$ and $D_1 \leq D_2 \leq D_3$ are its successive minima, we have
\begin{equation*}
\det(L) \leq D_1D_2D_3 \leq 2 \det(L),
\end{equation*}
we call the second inequality the \textbf{Hermite bound} \cite[Theorem 2.6.8]{martinet13}.

The significance of the Gross lattice to this work is explained by the following result, which is \cite[Theorem 1.1]{AubryVincent}:
\begin{theorem}\label{maintheorem}\label{thm:AV}
Let $p$ be a prime and $E$ be a supersingular elliptic curve defined over $\overline{\mathbb{F}}_p$. Then for any positive integer $\ell$, there exists an endomorphism of $E$ of degree $\ell p$ and trace zero if and only if $\mathcal{O}^T$ contains a rank-2 sublattice of determinant $4\ell p$.
\end{theorem}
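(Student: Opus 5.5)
The plan is to go through the quaternionic ``cross product'' on trace-zero elements. It attaches to a rank-$2$ sublattice of $\mathcal{O}^T$ a trace-zero element of $\mathcal{O}$ whose reduced norm is essentially the determinant of the sublattice. For trace-zero $x,y \in B_{p,\infty}$ one has $\overline{x}=-x$, $\overline{y}=-y$, and
\begin{equation*}
xy = -\tfrac{1}{2}\Trace(x\overline{y}) + \tfrac{1}{2}(xy-yx),
\end{equation*}
where $\tfrac12(xy-yx)$ is trace zero and orthogonal to $x$ and $y$. Taking reduced norms gives
\begin{equation*}
\Norm\!\left(\tfrac12(xy-yx)\right)=\Norm(x)\Norm(y)-\left(\tfrac12\Trace(x\overline y)\right)^2 ,
\end{equation*}
which is the Gram determinant of $\langle x,y\rangle$.

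\textbf{Direction ($\Leftarrow$).} Let $L\subset \mathcal{O}^T$ have determinant $4\ell p$, with basis $x=\tau(a)=2a-\Trace(a)$ and $y=\tau(b)=2b-\Trace(b)$ for some $a,b\in\mathcal{O}$. Scalars commute, so $\tfrac12(xy-yx)=2(ab-ba)$. Put $\gamma=ab-ba\in\mathcal{O}$. It has trace zero, and by the identity above $4\Norm(\gamma)=\det L=4\ell p$. So $\gamma$ is an endomorphism of $E$ of trace zero and degree $\ell p$, which proves this direction.

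\textbf{Direction ($\Rightarrow$).} Let $\gamma\in\mathcal{O}$ have trace zero and $\Norm(\gamma)=\ell p$. The natural candidate is $L=\gamma^{\perp}\cap\mathcal{O}^T$, a primitive rank-$2$ sublattice. For a ternary lattice $M$ and a primitive vector $v$ of the dual $M^\#$, one has $\det(v^\perp\cap M)=\det(M)\,\Norm(v)$, so the problem reduces to two things.
\begin{enumerate}
\item Compute $\det(\mathcal{O}^T)$, which I expect to be $4p^2$ in this normalization (as in Gross).
\item Identify $(\mathcal{O}^T)^\#$ with a scaled copy of the trace-zero part of the unique two-sided ideal $\mathfrak{P}\subset\mathcal{O}$ of reduced norm $p$.
\end{enumerate}
Because $p\mid\Norm(\gamma)$, the element $\gamma$ lies in $\mathfrak{P}$. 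So $\gamma$ (suitably scaled) defines a dual vector $v$, and $\det(\gamma^\perp\cap\mathcal{O}^T)$ equals $4\ell p/m^2$, where $m$ is the divisibility index of $\gamma$ in that dual lattice. If $m>1$, I replace $\gamma^\perp\cap\mathcal{O}^T$ by a sublattice of index $m$, which multiplies the determinant by $m^2$ and gives exactly $4\ell p$. An alternative route is to check directly that the cross-product map from primitive rank-$2$ sublattices of $\mathcal{O}^T$ to lines in $\mathfrak{P}\cap\{\Trace=0\}$ is surjective, by computing locally at each prime.

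\textbf{Main obstacle.} The hard step is the dual-lattice identification with the correct constants: showing that $(\mathcal{O}^T)^\#$ corresponds exactly to $\frac{1}{2p}(\mathfrak{P}\cap\{\Trace=0\})$, or whatever the precise scaling turns out to be. I would verify this locally. At $\ell'\neq 2,p$ everything is $M_2(\mathbb{Z}_{\ell'})$ and unimodular up to the explicit factors. At $2$ the map $\tau$ contributes the factor $4$. At $p$ the local maximal order is the valuation ring of the ramified quaternion division algebra, and $\mathfrak{P}$ is its maximal ideal. I would cross-check the constants on $p=2$ or $p=3$, or on the standard order $\mathbb{Z}\langle i,j\rangle$-type maximal order for $p\equiv 3\pmod 4$, before trusting the general computation.
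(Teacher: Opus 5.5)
Your argument is correct in substance and takes a different, coordinate-free route from the one the paper relies on. One constant in your plan needs fixing, and the step you call the main obstacle is in fact immediate.

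With the pairing $\frac12\Trace(x\overline y)$, the dual of the Gross lattice is $\frac1p(\mathfrak P\cap\{\Trace=0\})$, not $\frac1{2p}(\cdots)$. The latter is incompatible with $\det(\mathcal O^T)=4p^2$, and it would give $\ell p/m^2$ rather than the $4\ell p/m^2$ you (correctly) wrote.

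You also need only the inclusion $\gamma/p\in(\mathcal O^T)^\#$, and no local analysis at $2$ or $p$. For $v$ of trace zero, $\frac12\Trace\bigl(v\,\overline{\tau(a)}\bigr)=\Trace(v\overline a)$. The element $\gamma\overline a\in\mathcal O$ has reduced norm $\ell p\Norm(a)$, so its reduced trace is divisible by $p$; this is the observation made just before Proposition~\ref{prop:rank2sublattice}. Write $\gamma/p=mv_0$ with $v_0$ primitive in the dual and $m\in\mathbb Z_{>0}$. Then $\det(\gamma^\perp\cap\mathcal O^T)=4p^2\Norm(v_0)=4\ell p/m^2$, using $\det(\mathcal O^T)=4p^2$ as quoted in the paper. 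Your passage to an index-$m$ sublattice then finishes the argument.

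As for the comparison: the paper does not reprove the statement but quotes it from \cite{AubryVincent}. The ingredients it recalls in Section~\ref{sec:minimal} indicate that route:
\begin{enumerate}
\item expand $\det\Lambda$ as a quartic in the coordinates $a_i,b_i$;
\item substitute the $2\times2$ minors $x,y,z$ to get the ternary form $Q$, whose matrix is, up to a signed permutation of the variables, the adjugate of $G$ (whence $\Discr(Q)=\det(G)^2$);
\item use that every integer triple occurs as such minors;
\item identify $Q/(4p)$ with the degree form on the lattice $\mathcal R$.
\end{enumerate}

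Your commutator is the invariant form of that substitution. Indeed $\frac12[\gamma_1,\gamma_2]=x\cdot\frac12[\beta_1,\beta_2]+y\cdot\frac12[\beta_1,\beta_3]+z\cdot\frac12[\beta_2,\beta_3]$ has reduced norm $Q(x,y,z)$. Your dual-lattice determinant formula replaces both the surjectivity of the minors map and the identification of $Q/(4p)$.

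What each approach buys:
\begin{itemize}
\item Yours is shorter and more conceptual. Commutators lie in $\mathfrak P$ because $\mathcal O/\mathfrak P\cong\mathbb F_{p^2}$ is commutative, which explains why every such determinant is divisible by $4p$. Your construction also shows that every trace-zero element of $\mathfrak P$ is a single commutator $ab-ba$ with $a,b\in\mathcal O$.
\item The coordinate version produces the explicit form $Q$ with $\Discr(Q)=16p^4$. The paper then feeds this into Cassels' theorem and the sieve, so it is needed anyway for the rest of the paper.
\end{itemize}
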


\section{Connection to sublattices of rank 2 of the Gross lattice}\label{sec:Gross}

It is well known that if $E$ is a supersingular elliptic curve defined over $\overline{\mathbb{F}}_p$, then there exists an endomorphism of $E$ of degree $np$ for $n$ a positive integer if and only if there is an isogeny $E \to E^{(p)}$ of degree $n$ (see for example \cite[Section 2.1]{AubryVincent} for a short justification of this fact). In this section, we argue that to study the isogeny $E \to E^{(p)}$ of least degree, it is sufficient to consider inseparable endomorphisms of $E$ of trace $0$. Then, by the relation between inseparable endomorphisms of $E$ of trace $0$ and rank-2 sublattices of the Gross lattice of $E$ of \cite{AubryVincent} which is recalled here in Theorem \ref{thm:AV}, we find that to study isogenies $E \to E^{(p)}$ of small degree, we may consider the determinants of such lattices. 

Let $\alpha \in \End(E)$ and $\tau(x) = 2x - \Trace(x)$. We have that 
\begin{equation*}
\Norm{(\tau(\alpha))} = 4 \Norm(\alpha) - \Trace(\alpha)^2,
\end{equation*}
and since $\Norm(\tau(\alpha)) \geq 0$, it follows that 
\begin{equation}\label{eq:inequality}
\Norm(\alpha) \geq \frac{1}{4}\Trace(\alpha)^2.
\end{equation} 

Now let $\alpha \in \End(E)$ be an inseparable endomorphism; since $E$ is supersingular this is equivalent to $p | \deg(\alpha)$. In addition, since $p$ is either inert or ramified in any imaginary quadratic field that embeds into $B_{p,\infty}$, an element of $B_{p,\infty}$ of norm divisible by $p$ must also have trace divisible by $p$. Combining this with the inequality \eqref{eq:inequality}, we thus have that if $\alpha$ is inseparable with $\Norm(\alpha) < \frac{p^2}{4}$, then $\Trace(\alpha) = 0$.

\begin{proposition}\label{prop:rank2sublattice}
If $E$ is a supersingular elliptic curve defined over $\overline{\mathbb F}_p$ then:
$$\delta_E=\min_{\Lambda \subset \mathcal{O}^T}\frac{\det(\Lambda)}{4p}$$
where the minimum runs over every rank-2 sublattice of the Gross lattice $\mathcal{O}^T$, where $\mathcal{O}$ is a maximal order of $B_{p,\infty}$ isomorphic to $\End(E)$.
\end{proposition}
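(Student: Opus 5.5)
The plan is to show that the set of determinants of rank-2 sublattices of $\mathcal{O}^T$ is exactly $\{4\ell p : \ell \geq 1,\ E \text{ has a trace-zero endomorphism of degree } \ell p\}$. Then I will show that the least such $\ell$ equals $\delta_E$.

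\emph{Step 1: every determinant is a positive multiple of $4p$.} Let $\Lambda$ have basis $x=\tau(a)$, $y=\tau(b)$ with $a,b\in\mathcal{O}$. Since $x,y$ have trace zero, I will use the identity
\begin{equation*}
\det(\Lambda)=\Norm(x)\Norm(y)-\tfrac14\Trace(x\overline{y})^2=\tfrac14\Norm(xy-yx).
\end{equation*}
Because $x=2a-\Trace(a)$ and $y=2b-\Trace(b)$, we get $xy-yx=4(ab-ba)$, and hence $\det(\Lambda)=4\Norm(ab-ba)$. For the maximal order $\mathcal{O}$, its completion at $p$ has a unique maximal two-sided ideal $\mathfrak{P}$ with $\mathcal{O}_p/\mathfrak{P}\cong\mathbb{F}_{p^2}$, which is commutative. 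So $ab-ba\in\mathfrak{P}$ and $p\mid \Norm(ab-ba)$. Since $\Lambda$ has rank $2$ in a definite space, $\det(\Lambda)>0$. Therefore $\det(\Lambda)=4\ell p$ with $\ell\geq1$ a positive integer. (Alternatively I would simply cite this divisibility from \cite{AubryVincent} if it is stated there.)

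\emph{Step 2: apply Theorem \ref{thm:AV}.} By Step 1 and Theorem \ref{thm:AV}, $\min_\Lambda \det(\Lambda)/4p$ equals the least $\ell$ such that $E$ has a trace-zero endomorphism of degree $\ell p$. Call this $\ell_0$. By the fact recalled at the start of this section (an endomorphism of degree $np$ exists if and only if an isogeny $E\to E^{(p)}$ of degree $n$ exists), we get $\delta_E\le \ell_0$ immediately. For the reverse inequality, take an isogeny $E\to E^{(p)}$ of degree $\delta_E$. It gives an inseparable endomorphism $\alpha$ of degree $\delta_E p$. By the discussion preceding the proposition, $\alpha$ has trace zero as soon as $\delta_E p<p^2/4$, that is $\delta_E<p/4$. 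In that case $\ell_0\le\delta_E$.

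\emph{Step 3 (main obstacle): prove $\delta_E<p/4$ without circularity.} Theorem \ref{thm:main} is deduced from this proposition, so I cannot use it here. Instead I will use the Hermite bound directly. Take $\Lambda$ spanned by the first two elements $\beta_1,\beta_2$ of a successive minimal basis. Then
\begin{equation*}
\det(\Lambda)\le D_1D_2\le (D_1D_2D_3)^{2/3}\le(2\det\mathcal{O}^T)^{2/3}.
\end{equation*}
I will use the known value $\det\mathcal{O}^T=4p^2$ from \cite{Gross,Goren-Love-short}, which gives $\det(\Lambda)\le 4p^{4/3}$. By Step 1 this yields an endomorphism of degree $\ell p$ with $\ell\le p^{1/3}$, so $\delta_E\le p^{1/3}$. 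For $p>8$ this is less than $p/4$. For small primes, all supersingular $j$-invariants lie in $\mathbb{F}_p$, so $\delta_E=1<p/4$ when $p\ge5$. For $p=2,3$ the curve is unique with $j\in\mathbb{F}_p$, and the statement can be checked directly.

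The delicate points are Step 1 and pinning down the exact normalization of $\det\mathcal{O}^T$. Step 1 needs the determinant to be divisible by $4p$ rather than just by $p$, since otherwise the minimum could be non-integral. The normalization of $\det\mathcal{O}^T$ matters because it controls the non-circular bound in Step 3.
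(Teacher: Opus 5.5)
Your proposal is correct and follows the same skeleton as the paper's proof: an a priori bound on $\delta_E$ from the Hermite bound together with $\det\mathcal{O}^T=4p^2$, the observation that an inseparable endomorphism of small degree has trace $0$, an appeal to Theorem~\ref{thm:AV}, and a separate treatment of tiny primes. The differences are refinements. You prove the $4p$-divisibility of every rank-2 determinant yourself via $\det\Lambda=4\Norm(ab-ba)$ and the commutativity of $\mathcal{O}_p/\mathfrak{P}$, where the paper cites \cite[Proposition 3.12]{Goren-Love-short}; this also makes the reverse inequality explicit, which the paper leaves implicit. Your estimate $D_1D_2\le(D_1D_2D_3)^{2/3}$ gives $\delta_E\le p^{1/3}$ directly, so only $p\le 7$ needs special handling, compared with $p\le 11$ in the paper.
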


\begin{proof}

Let ${\mathcal O}^{T}$ be the Gross lattice of $\mathcal O$, $D_1\leq D_2 \leq D_3$ be its successive minima and $\{\beta_1,\beta_2,\beta_3\}$ be a successive minimal basis of ${\mathcal O}^{T}$.
In addition, let $t_{12}:=\frac{1}{2} \Trace(\beta_1 \overline{\beta_2})$.

By Lemma 3.1 of \cite{CG14} and using the Hermite bound for a lattice of rank $3$, we have:
\begin{equation}\label{D1D2D3}
4p^2\leq D_1D_2D_3 \leq 8p^2
\end{equation}
hence 
$$D_1D_2-t_{12}^2\leq D_1D_2 \leq \frac{8p^2}{D_3}.$$

Moreover, by Proposition 3.12 of \cite{Goren-Love-short}, we know that $D_1D_2-t_{12}^2=4pn$ with $n\in {\mathbb N}\setminus \{0\}$.
So, $n\leq \frac{2p}{D_3}$ and thus $D_3\leq \frac{2p}{n}$.
Since $D_1 \leq D_2 \leq D_3$, 
$$ 4p^2 \leq D_1D_2D_3 \leq \left( \frac{2p}{n}\right)^3$$
which gives
$$n\leq \sqrt[3]{2p}.$$
Thus, we have shown that there exists a rank-2 sublattice of ${\mathcal O}^{T}$ of determinant $4pn$ with $n\leq \sqrt[3]{2p}$.

By Theorem 1.1 of \cite{AubryVincent}, there exists an inseparable endomorphism of $E$ of degree $pn$ (and trace $0$) with $n\leq \sqrt[3]{2p}$, and therefore an isogeny  $E \to E^{(p)}$ of degree  $n\leq \sqrt[3]{2p}$. From this it follows that $\delta_E \leq \sqrt[3]{2p}$.

Now let $\phi \colon E \to E^{(p)}$ be an isogeny from $E$ to $E^{(p)}$ of least degree. We have thus that $\deg(\phi) \leq \sqrt[3]{2p}$. This isogeny corresponds to an endomorphism $\psi$ of $E$ of degree less than or equal to $\sqrt[3]{2p^4}$. We have that $\sqrt[3]{2p^4} < \frac{p^2}{4}$ when $p > 11$, and therefore in this case we must have $\Trace(\psi) = 0$, as argued immediately before the statement of this proposition. Therefore, the isogeny $E \to E^{(p)}$ of least degree must correspond to an inseparable endomorphism of $E$ of trace $0$, which in turn corresponds to a sublattice of $\mathcal{O}^T$ of rank $2$.

Finally, we handle the case of $p \leq 11$. In each case, every supersingular elliptic curve defined over $\overline{\mathbb{F}}_p$ has $j$-invariant in $\mathbb{F}_p$ and therefore $\delta_E =1$. By \cite[Proposition 3.2.2]{HKTV}, in each of these cases, the lattice spanned by $\beta_1$ and $\beta_2$ in $\mathcal{O}^T$, where $\beta_1$ and $\beta_2$ attain the first two successive minima of $\mathcal{O}^T$, has determinant $4p$ and therefore the result holds.
\end{proof}

\begin{remark}
Note that this proof does give a first bound $\delta_E \leq \sqrt[3]{2p}$ for $E$ a supersingular elliptic curve defined over $\overline{\mathbb{F}}_p$, which we will improve in the next section.
\end{remark}

\section{On the minimal degree of an isogeny $E \to E^{(p)}$}\label{sec:minimal}

In this section, we study the quartic form giving the determinant of a sublattice of $\mathcal{O}^T$ of rank $2$. After a change of variable, we can obtain a ternary quadratic form which we denote $Q$ and which represents the same integers as the original determinant form. Then, using a result of Cassels, we can give a sharp bound on the integers represented by $Q$, which by Proposition \ref{prop:rank2sublattice} gives us a bound on $\delta_E$. Finally we give several examples where the bound is sharp.

\subsection{Bounding the minimal degree}

Let $\gamma_1$ and $\gamma_2$ be two elements of $\mathcal{O}^T$ and denote by $\Lambda$ the lattice that they generate. 
As usual we write $\{\beta_1,\beta_2,\beta_3\}$ for a 
successive minimal basis for $\mathcal{O}^T$, then 
$$\gamma_1 = a_1 \beta_1 + a_2 \beta_2 + a_3 \beta_3$$
 and 
 $$\gamma_2 = b_1 \beta_1 + b_2 \beta_2 + b_3 \beta_3$$
  for some integers $a_i$ and $b_i$, for $i = 1, 2, 3$. 
  
As before we set
$D_i:=\Norm(\beta_i)$
and
$t_{ij}:=\frac{1}{2}\Trace(\beta_i\bar\beta_j)$, then the Gram matrix for the basis $\{\beta_1,\beta_2,\beta_3\}$ is
  
\begin{equation}\label{Gram}
G=
\left(
\begin{matrix}
D_1& t_{12} & t_{13}\\
t_{21} & D_2 & t_{23}\\
t_{31} & t_{32} & D_3\cr
\end{matrix}
\right).
\end{equation}
  
With these conventions, a calculation shows that
\begin{equation*}
\Norm(\gamma_1) = D_1 a_1^2 + 2 t_{12}a_1a_2 + 2 t_{13}a_1a_3+D_2a_2^2 
+2t_{23}a_2a_3+D_3a_3^2
\end{equation*}
and
\begin{equation*}
\Norm(\gamma_2) = D_1 b_1^2 + 2 t_{12}b_1b_2 + 2 t_{13}b_1b_3+D_2b_2^2 
+2t_{23}b_2b_3+D_3b_3^2
\end{equation*}
as well as
\begin{equation*}
\frac{1}{2}\Trace(\gamma_1 \overline{\gamma}_2) = 
D_1a_1b_1+D_2a_2b_2+D_3a_3b_3
+t_{12}a_1b_2+t_{12}a_2b_1+t_{13}a_1b_3+t_{13}a_3b_1+t_{23}a_2b_3+t_{23}a_3b_2.
\end{equation*}

The determinant of $\Lambda$ is then given by the expression $\Norm(\gamma_1)\Norm(\gamma_2) - \frac{1}{4} \Trace(\gamma_1\overline{\gamma}_2)^2$, which is a homogenous polynomial $\det \Lambda$ of degree 4 in 6 variables $a_1,a_2,a_3,b_1,b_2,b_3$.

After performing the change of variables
\begin{equation}\label{eq:variables}
\left\{
\begin{matrix}
x=a_1b_2-a_2b_1\\
y=a_1b_3-a_3b_1\\
z= a_2b_3-a_3b_2\cr
\end{matrix}
\right.
\end{equation}
we obtain that $\det \Lambda$ is equal to the quadratic form $Q(x,y,z)\in{\mathbb Z}[T]$ given by:
\begin{align}\label{eq:Q}
Q = &(D_1D_2-t_{12}^2)x^2 + (D_1D_3-t_{13}^2)y^2 + (D_2D_3-t_{23}^2)z^2 \\
& +2(D_1t_{23}-t_{12}t_{13})xy - 2(D_2t_{13}-t_{12}t_{23})xz + 2(D_3t_{12}-t_{13}t_{23})yz. \notag
\end{align}

We have that $\det \Lambda$ and $Q$ represent the same integers, by which we mean that there are $a_i,b_i \in \mathbb{Z}$ such that $\det \Lambda (a_1,a_2,a_3,b_1,b_2,b_3) = n$ if and only if there are integers $x,y,z$ such that $Q(x,y,z) = n$. 
Indeed, given values $a_i, b_i \in \mathbb{Z}$ for $i= 1,2,3$, we can obtain values $x,y,z \in \mathbb{Z}$ straightforwardly from the change of variables given. Conversely, as argued in the proof of Theorem 3.3 of \cite{AubryVincent}, given integers $x,y,z$, we can always find integers $a_i, b_i$ for $i= 1,2,3$ satisfying the change of variables above. From this it follows that $\det \Lambda$ and $Q$ represent the same values.

Therefore we may now turn our attention to giving an upper bound on the least positive integer $n$ such that $Q$ represents $n$. For this we may use a result of Cassels bounding this value in terms of the discriminant of the ternary quadratic form.

As given on p.\ 31 of \cite{Cassels}, we have:
\begin{equation}\label{Discriminant}
\Discr(Q)=
\left\vert
\begin{matrix}
D_1D_2-t_{12}^2 & D_1t_{23}-t_{12}t_{13} & -(D_2t_{13}-t_{12}t_{23})\\
D_1t_{23}-t_{12}t_{23} & D_1D_3-t_{13}^2 & D_3t_{12}-t_{13}t_{23}\\
-(D_2t_{13}-t_{12}t_{23}) & D_3t_{12}-t_{13}t_{23} & D_2D_3-t_{23}^2\cr
\end{matrix}
\right\vert.
\end{equation}

\begin{proposition}\label{Discriminant}
For the matrix $G$ as in equation \eqref{Gram} and the ternary quadratic form given in \eqref{eq:Q}, we have:
$$\Discr(Q)=\det(G)^2=16p^4.$$
\end{proposition}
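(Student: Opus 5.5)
The plan has two parts: identify the coefficient matrix of $Q$ with the adjugate of $G$, up to a sign change, and then compute $\det(G)$ from the known discriminant of the Gross lattice.

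\textbf{The matrix of $Q$ is essentially $\operatorname{adj}(G)$.} Under the change of variables \eqref{eq:variables}, $(x,y,z)$ are the $2\times 2$ minors of the $2\times 3$ matrix with rows $(a_1,a_2,a_3)$ and $(b_1,b_2,b_3)$. These are the coordinates of the exterior product $\gamma_1\wedge\gamma_2$ in the basis $\beta_1\wedge\beta_2,\ \beta_1\wedge\beta_3,\ \beta_2\wedge\beta_3$. The Gram determinant $\Norm(\gamma_1)\Norm(\gamma_2)-\frac14\Trace(\gamma_1\overline{\gamma}_2)^2$ is the induced bilinear form on $\wedge^2$ evaluated at $\gamma_1\wedge\gamma_2$. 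Its Gram matrix in that basis is the second compound matrix $C_2(G)$, whose entries are the $2\times2$ minors of $G$; this is precisely how the coefficients in \eqref{eq:Q} arise. The ordering $\beta_1\wedge\beta_2,\beta_1\wedge\beta_3,\beta_2\wedge\beta_3$ differs from the one identifying $\wedge^2$ with the dual via $\beta_i\wedge\beta_j\mapsto \pm\beta_k^*$ only by a signed permutation. Hence $C_2(G)=P\,\operatorname{adj}(G)\,P^{T}$ with $P$ a signed permutation matrix, so $\det P=\pm1$. I would verify this identification directly by comparing the nine entries of the matrix in \eqref{Discriminant} with the cofactors of $G$; this is routine. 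There is an apparent typo in the $(2,1)$ entry, which should read $D_1t_{23}-t_{12}t_{13}$ by symmetry.

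\textbf{Hence $\Discr(Q)=\det(G)^2$.} It follows that $\Discr(Q)=\det(\operatorname{adj}G)=\det(G)^{3-1}=\det(G)^2$, using the standard identity $\det(\operatorname{adj}M)=\det(M)^{n-1}$ for $n=3$. Note that the normalization in Cassels' discriminant (p.\ 31) is the determinant of the symmetric coefficient matrix itself. That is exactly the matrix displayed, so no extra factor of $2$ or $4$ appears.

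\textbf{Computing $\det(G)$.} It remains to show $\det(G)=4p^2$, i.e.\ that the Gross lattice $\mathcal{O}^T$ of a maximal order has determinant $4p^2$. This is the fact already used in \eqref{D1D2D3} via Lemma 3.1 of \cite{CG14}, since the lower bound $4p^2\le D_1D_2D_3$ comes from $\det(\mathcal{O}^T)=4p^2$; it is also in \cite{Gross}. If a self-contained justification is wanted, I would argue as follows. $\mathcal{O}$ has reduced discriminant $p$, so its Gram matrix for $\frac12\Trace(x\bar y)$ has determinant $p^2/16$. The map $x\mapsto 2x-\Trace(x)$ realizes $\mathcal{O}^T$ as the projection of $2\mathcal{O}$ onto the trace-zero space, and $\mathbb{Z}\oplus\mathcal{O}^T$ sits inside $2\mathcal{O}+\mathbb{Z}$ with controlled index. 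Tracking these indices gives $16\cdot p^2/16\cdot 4=4p^2$. The index bookkeeping is the one delicate point, so I would prefer simply to cite \cite{Gross}/\cite{CG14}. Then $\Discr(Q)=(4p^2)^2=16p^4$.

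The main obstacle is only the sign and ordering bookkeeping in matching \eqref{Discriminant} with the adjugate. Since signed permutations do not change the determinant, this step cannot affect the result.
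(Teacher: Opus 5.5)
Your proposal is correct. Its skeleton matches the paper's: establish $\Discr(Q)=\det(G)^2$, then quote \cite[Lemma 3.1]{CG14} for $\det(G)=4p^2$.

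Where you genuinely differ is the first step. The paper dismisses it as ``a tedious calculation'', i.e.\ a brute-force expansion of the $3\times 3$ determinant of $2\times 2$ minors. You instead read the coefficient matrix of $Q$ as the Gram matrix of the induced form on $\wedge^2$, i.e.\ the second compound $C_2(G)$, and reduce to $\det\operatorname{adj}(G)=\det(G)^2$. This checks out: the six coefficients in \eqref{eq:Q} are exactly the minors $G_{ik}G_{jl}-G_{il}G_{jk}$. The signed permutation relating $C_2(G)$ to $\operatorname{adj}(G)$ is $\beta_2\wedge\beta_3\mapsto\beta_1^*$, $\beta_1\wedge\beta_3\mapsto-\beta_2^*$, $\beta_1\wedge\beta_2\mapsto\beta_3^*$. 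You are also right that the $(2,1)$ entry of the displayed discriminant matrix is a typo for $D_1t_{23}-t_{12}t_{13}$.

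Your route explains the identity rather than merely verifying it. It is the case $n=3$, $k=2$ of $\det C_k(A)=\det(A)^{\binom{n-1}{k-1}}$, so it transfers to lattices of other ranks, and it exhibits \eqref{eq:variables} as the Pl\"ucker map. The paper's expansion, by contrast, needs no multilinear algebra and is immediately machine-checkable.

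Finally, the index bookkeeping you flagged in your optional derivation of $\det(G)$ is cleaner than your sketch suggests. Since $2x=\tau(x)+\Trace(x)$, one has $\mathbb{Z}+\mathcal{O}^T=\mathbb{Z}+2\mathcal{O}$. This is an orthogonal sum of index $8$ in $\mathcal{O}$, so $\det(\mathcal{O}^T)=8^2\cdot\frac{p^2}{16}=4p^2$.
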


\begin{proof}
A tedious calculation shows that $\Discr(Q)=\det(G)^2$, and by \cite[Lemma 3.1]{CG14} we have $\det(G) = 4p^2$.
\end{proof}

We can now prove the main theorem of this article:

\begin{theorem}\label{better_bound}
Let $p$ be a prime number and $E$ be a supersingular elliptic curve defined over 
$\overline{\mathbb F}_p$, then
$$\delta_E\leq \sqrt[3]{\frac{p}{2}}.$$
\end{theorem}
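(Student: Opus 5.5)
The plan is to combine Proposition \ref{prop:rank2sublattice} with a classical bound of Cassels on the minimum of a positive definite ternary quadratic form. By Proposition \ref{prop:rank2sublattice}, $\delta_E$ equals the least value of $\det(\Lambda)/(4p)$ over rank-$2$ sublattices $\Lambda\subset\mathcal{O}^T$. By the discussion preceding Proposition \ref{Discriminant}, the determinants of such sublattices are exactly the positive integers represented by the ternary form $Q$ of \eqref{eq:Q}. Hence $4p\,\delta_E$ is the minimum of $Q$ over nonzero integer vectors.

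First I will check that $Q$ is positive definite. This should follow because its matrix is the adjugate of the positive definite Gram matrix $G$: $Q(x,y,z)$ is the Gram determinant of $\gamma_1,\gamma_2$, so it is the squared norm of the "cross product" in the dual picture. Next I will invoke the classical result on p.~31 and following of \cite{Cassels}, the Gauss/Hermite constant for ternary forms. It says that a positive definite ternary quadratic form $f$ with discriminant $\Discr(f)$, in Cassels' normalisation, represents a nonzero value $m$ with
$$m\le \left(2\,\Discr(f)\right)^{1/3}.$$
Equivalently, $\gamma_3^3=2$ for the lattice normalisation, with extremal form the face-centred cubic one. I need to be careful to match this normalisation with the determinant computed in \eqref{Discriminant}, which uses the symmetric matrix with half off-diagonal coefficients. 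Plugging in $\Discr(Q)=16p^4$ from Proposition \ref{Discriminant} gives
$$\min Q\le (2\cdot 16p^4)^{1/3}=\sqrt[3]{32p^4}=4p\sqrt[3]{p/2}.$$

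Dividing by $4p$ then yields $\delta_E\le\sqrt[3]{p/2}$, as claimed. The small primes $p\le 11$ are covered by Proposition \ref{prop:rank2sublattice} as stated; there $\delta_E=1$, and the inequality holds trivially for $p\ge2$.

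The main obstacle is bookkeeping the normalisation constant in Cassels' theorem. The discriminant in \eqref{Discriminant} must be the determinant of the matrix $(f_{ij})$ with $f=\sum f_{ij}x_ix_j$. The Hermite constant then says that the minimum is at most $\gamma_3\det^{1/3}$ with $\gamma_3=2^{1/3}$. A factor-of-$2$ slip, of the kind that distinguishes Cassels' "$4\Discr$" conventions, would change the final constant. So I will verify the statement against the extremal example $x^2+y^2+z^2+xy+yz+xz$, which has determinant $1/2$ and minimum $1$.
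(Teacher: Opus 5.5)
Your proposal is correct and follows the paper's proof: combine Proposition \ref{prop:rank2sublattice} with Cassels' bound $\min f\le\sqrt[3]{2\Discr(f)}$ for positive definite ternary forms and $\Discr(Q)=16p^4$ to get $\min Q\le 4p\sqrt[3]{p/2}$, then divide by $4p$. Your extra checks are details the paper leaves implicit, and both are right: positive definiteness of $Q$ (its matrix is, up to a signed permutation, the adjugate of $G$), and the normalisation test on $x^2+y^2+z^2+xy+yz+xz$, which confirms that Cassels' discriminant uses the half-off-diagonal matrix as in \eqref{Discriminant}.
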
  

\begin{proof}
Cassels proved in Theorem III page 33 of \cite{Cassels} that if $f(x)=\sum f_{ij}x_ix_j$ with $f_{ij}=f_{ji}$
is a positive definite ternary quadratic form, then there is an integral vector $u\not=0$ such that
$$f(u)\leq \sqrt[3]{2\Discr(f)}$$
where $\Discr(f)=\det(f_{ij})$ is the discriminant of $f$.

Now let $\mathcal{O}$ be a maximal order of $B_{p,\infty}$ isomorphic to $\End(E)$. Cassels's result implies, together with Proposition \ref{Discriminant}, that the quadratic form $Q$ represents an integer $n$ such that $n\leq \sqrt[3]{32p^4} = 4p \sqrt[3]{\frac{p}{2}}$.
Thus $\mathcal{O}^T$ contains a rank-2 sublattice of determinant $\leq 4p \sqrt[3]{\frac{p}{2}}$, and therefore by Proposition \ref{prop:rank2sublattice}, we conclude that $\delta_E\leq \sqrt[3]{\frac{p}{2}}$. 
\end{proof}

 \begin{remark}\label{rem:Yang}
The bound of Theorem \ref{better_bound} is best possible asymptotically: Indeed, Yang  proved in \cite[Proposition 1.4]{Yang} that, if there are positive constants $\theta$ and $C$ such that every supersingular elliptic curve over $\overline{\mathbb F}_p$ can be lifted to a CM elliptic curve over some number field with CM by the quadratic ring of discriminant $D$ with $D\leq C p^{\theta}$, then $\theta\geq \frac{2}{3}$.
But a supersingular elliptic curve $E$ can be lifted to an elliptic curve defined over a number field with CM by the quadratic ring of discriminant $D$
if and only if its Gross lattice represents $D$ by \cite[Proposition 3.7]{Goren-Love-short} and the remark immediately following conditions (i) and (ii) of \cite[2.1.5]{CCO}.

Hence, if $\theta<\frac{2}{3}$, then for any constant $C$, there exist a prime $p$ and  a supersingular elliptic curve $E$ defined over $\overline{\mathbb F}_p$ with $D_1>Cp^{\theta}$.
Letting $D_i$ be the $i$th successive minimum of the Gross lattice of $E$ and $t_{12} = \frac{1}{2} \Trace(\beta_1 \overline{\beta}_2)$ as before, by \cite[Lemma 2.6.1]{HKTV} we have that $|t_{12}| \leq \frac{D_1}{2}$, and therefore
\begin{equation*}
D_1D_2 - t_{12}^2 \geq \frac{3D_1^2}{4} > \frac{3C^2}{4}p^{2\theta}.
\end{equation*}
Since $D_1D_2 - t_{12}^2$ is the determinant of a sublattice of the Gross lattice of rank $2$, by Proposition \ref{prop:rank2sublattice} for this elliptic curve $E$ there is an isogeny $E \to E^{(p)}$ of degree strictly greater than
\begin{equation*}
\frac{3C^2}{16}p^{2\theta-1}.
\end{equation*}

Letting $\theta = \frac{\eta +1}{2}$ and $C = \sqrt{\frac{4C'}{3}}$, we deduce thus from Yang's result that if $\eta <\frac{1}{3}$, then for any constant $C'>0$ there exists a prime number $p$ and a supersingular elliptic curve $E$ defined over $\overline{\mathbb F}_p$ with an isogeny $E\longrightarrow E^{(p)}$ of degree at least $C'p^{\eta}$.
 \end{remark}

\subsection{Sharpness of the bound} 

Having given this bound and established that it is asymptotically sharp, it is natural to ask if it can be attained. In this section we answer this question in the affirmative with several examples, beginning with one detailed case.

\subsubsection{An example where $p = 101,051$}\label{sec:example}

In this section, let $p = 101,051$, and consider the quaternion algebra $B_{p,\infty}$ ramified only at $p$ and $\infty$. This quaternion algebra can be written using the basis $\{1,i,j,k\}$ where $i^2 = -1$, $j^2 = -p$ and $ij = k$ as usual, and for this prime, $\lfloor \sqrt[3]{\frac{p}{2}}\rfloor = 36$. 

Let $\mathcal{O}$ be the maximal order of $B_{p,\infty}$ with reduced basis
\begin{equation*}
\left[1, \frac{1}{2} + \frac{59093}{2662}i - \frac{175}{2662}j + \frac{65}{2662}k,  \frac{2089}{242}i + \frac{5}{121}j - \frac{21}{242}k, \frac{37902}{1331}i + \frac{62}{1331}j + \frac{15}{1331}k\right].
\end{equation*}

Since $p \equiv 3 \pmod{4}$, we have $\mathbb{F}_{p^2} \cong \mathbb{F}_p[x]/(x^2+1)$; write $\alpha$ for an element of $\mathbb{F}_{p^2}$ such that $\alpha^2 = -1$. Now let $E$ be the supersingular elliptic curve defined over $\mathbb{F}_{p^2}$ with $j$-invariant $j = 67611\alpha + 42318$ and equation given by
\begin{equation*}
y^2 = x^3 + (29936\alpha+99804)x + (16688\alpha+54970).
\end{equation*}

Then we have that both $E$ and its Galois conjugate satisfy $\End(E) \cong \mathcal{O}$ and $\delta_E = 36$. Here we have used the software provided as a companion to \cite{deuringforthepeople} to compute the equation of $E$ from a basis of its endomorphism ring and \cite{sagemath} to obtain the equation of the curve.

\subsubsection{All examples for $p \leq 22,000$}

In addition to this example, we have performed an exhaustive search for all primes $p \leq 22,000$, computing for each prime a basis for each maximal order $\mathcal{O}$ of $B_{p,\infty}$ and the value $\delta_E$ of a supersingular elliptic curve $E$ such that $\End(E) \cong \mathcal{O}$. In Table \ref{table:smallwisdeprimes}, we list the values of $p$ such that there is $E$ defined over $\overline{\mathbb{F}}_p$ with $\delta_E = \lfloor \sqrt[3]{\frac{p}{2}} \rfloor$. In other words, for these primes there is a supersingular elliptic curve with no isogeny to its Galois conjugate of degree smaller than the largest possible value allowed by Theorem \ref{thm:main}, and in those cases the bound we provide is sharp.

\begin{table}[h] 
\caption{Primes between $2$ and $22,000$ such that there exists a supersingular elliptic curve $E$ defined over $\overline{\mathbb{F}}_p$ with $\delta_E = \lfloor \sqrt[3]{\frac{p}{2}} \rfloor$}
\begin{tabular}{ c | l }
$\delta_E = \lfloor \sqrt[3]{\frac{p}{2}} \rfloor$ & $p$ \\
\hline
1 & 2, 3, 5, 7, 11, 13 \\
2 & 37, 43, 53 \\
3 & 101, 107, 109, 113, 127 \\
4 & 211, 227, 239, 241 \\
5 & 331, 379, 401, 409, 421, 431 \\
6 & 617, 641, 653, 673, 683 \\
7 & 857, 937, 953, 977, 983, 1013 \\
8 & 1279, 1399, 1447 \\
9 & 1811, 1871, 1949, 1951, 1979, 1997, 1999 \\
10 & -- \\
11 & 3109, 3181, 3229, 3253 \\
12 & -- \\
13 & 5279, 5387, 5417, 5419, 5483 \\
14 & 6361, 6481\\
15 & 8081, 8111\\
16 & 9623 \\
17 & 11287, 11383, 11467, 11503\\
18 & --\\
19 & 15101, 15541, 15901 \\
20 & 17851, 18379 \\
21 & 20219, 20747, 20879, 21143, 21187
\end{tabular}
\label{table:smallwisdeprimes}
\end{table}

\section{Methods and computations}\label{sec:methods}
 
Having established a bound on the value $\delta_E$, we now turn our attention to a technique used to find primes $p$ and supersingular elliptic curves defined over $\overline{\mathbb{F}}_p$ such that $\delta_E = \lfloor \sqrt[3]{\frac{p}{2}}\rfloor$. Following this, we describe how we computed $\delta_E$ for each such curve and the extent of the calculations performed in the preparation of this manuscript.
 
 \subsection{Sieving for examples}\label{sec:sieving}
 
As $p$ grows, the time necessary to compute $\delta_E$ for every supersingular elliptic curve defined over $\overline{\mathbb{F}}_p$ grows quickly (see Section \ref{sec:compmethods} for the time it takes to perform the computation of $\delta_E$ for all supersingular elliptic curves $E$ defined over $\overline{\mathbb{F}}_{263429}$, for example), and therefore it becomes prohibitive to conduct an exhaustive search for primes $p$ such that there is a supersingular elliptic curve $E$ defined over $\overline{\mathbb{F}}_p$ with $\delta_E = \lfloor \sqrt[3]{\frac{p}{2}}\rfloor$ . In this section we give a method to reduce the number of primes to consider, by giving a computational necessary condition on $p$ for such a supersingular elliptic curve to exist.
 
 Let $E$ be a supersingular elliptic curve defined over $\overline{\mathbb{F}}_p$ and $\mathcal{R}$ be the sublattice of isogenies $\varphi\colon E \to E^{(p)}$ such that if $\hat{\varphi} \colon E^{(p)} \to E$ is the dual isogeny of $\varphi$ and $F \colon E \to E^{(p)}$ is the degree-$p$ Frobenius isogeny, then $\hat{\varphi} \circ F \in \End(E)$ is of trace $0$. Since by \cite{AubryVincent}, the sublattice of $\End(E)$ of inseparable elements of trace $0$ has rank $3$, and $\varphi \mapsto \hat{\varphi} \circ F$ is a bijection between isogenies from $E$ to $E^{(p)}$ and inseparable endomorphisms of $E$, then $\mathcal{R}$ is also a lattice of rank $3$. Furthermore, again by \cite{AubryVincent}, for $Q$ as in equation \eqref{eq:Q}, the degree form on $\mathcal{R}$ is equal to $Q' = \frac{Q}{4p}$ and is of discriminant $\frac{p}{4}$. If $\{\varphi_1,\varphi_2,\varphi_3\}$ is a successive minimal basis for the quadratic module $(\mathcal{R}, Q')$, we denote the Gram matrix of such a basis by
 \begin{equation*}
N =  \begin{pmatrix}
 N_1 & \frac{x}{2} & \frac{y}{2} \\
  \frac{x}{2} & N_2 &  \frac{z}{2} \\
   \frac{y}{2} &  \frac{z}{2} &N_3
 \end{pmatrix},
 \end{equation*}
 where $x,y,z \in \mathbb{Z}$ and $N_1$ is the least degree of an isogeny $E \to E^{(p)}$, as argued in Section \ref{sec:Gross}. By an elementary computation, we obtain that
 \begin{equation}\label{eq:detN}
 p = 4 \det (N) = xyz -N_3x^2 - N_2y^2 - N_1z^2 + 4N_1N_2N_3 .
 \end{equation}
 
By \cite[Lemma 2.3.4]{HKTV} and adapting the proof of \cite[Lemma 2.6.1]{HKTV} to this case, since this is a lattice of rank $3$, we may ensure that $0 \leq x,y$ by negating the last two elements of our successive minimal basis if necessary and we have that $x,y \leq N_1$ and $|z| \leq N_2$. Furthermore, using the Hermite bound we have $\frac{p}{4}\leq N_1N_2N_3 \leq \frac{p}{2}$. If we suppose that $N_1 = \lfloor \sqrt[3]{\frac{p}{2}} \rfloor$, and using that $N_1 \leq N_2$, this forces 
 \begin{equation*}
\left\lfloor \sqrt[3]{\frac{p}{2}} \right \rfloor \leq N_2 \leq N_3 \leq \frac{p}{ \lfloor \sqrt[3]{\frac{p}{2}} \rfloor^2}.
 \end{equation*}
 
Putting this together, we have that $p$ is a prime such that there is a supersingular elliptic curve $E$ defined over $\overline{\mathbb{F}}_p$ with $\delta_p = \lfloor \sqrt[3]{\frac{p}{2}} \rfloor$ only if there is a tuple of integers $(N_1,N_2,N_3,x,y,z)$
satisfying equation \eqref{eq:detN} as well as the conditions
\begin{equation}\label{eq:constraints}
\begin{cases}
N_1 = \lfloor \sqrt[3]{\frac{p}{2}}  \rfloor,\\
\left \lfloor \sqrt[3]{\frac{p}{2}} \right \rfloor \leq N_2 \leq \frac{p}{ \lfloor \sqrt[3]{\frac{p}{2}} \rfloor^2},\\
\max\left(N_2, \left \lceil \frac{p}{4N_1N_2}\right \rceil\right) \leq N_3 \leq \left\lfloor \frac{p}{2N_1N_2}\right\rfloor, \\
0 \leq x,y \leq N_1,\\
|z| \leq N_2,
\end{cases}
\end{equation}
which require a (quick, see Section \ref{sec:compmethods} for timings) finite computation to verify. 

We stress that the existence of a tuple of integers satisfying equation \eqref{eq:detN} and the constraints \eqref{eq:constraints} for a given prime $p$ does not guarantee the existence of a supersingular elliptic curve $E$ defined over $\overline{\mathbb{F}}_p$ with $\delta_E = \lfloor \sqrt[3]{\frac{p}{2}} \rfloor$. For example, if $p = 22,273$, we obtain the Gram matrix
\begin{equation*}
N = \begin{pmatrix}
 22 & \frac{15}{2} & 8 \\
  \frac{15}{2} & 22 &  -9 \\
8 &  -9 &23
\end{pmatrix}
\end{equation*}
for a putative quadratic module $(\mathcal{R},Q')$, but there is no supersingular elliptic curve realizing this quadratic module.

By checking only primes for which we find solutions to \eqref{eq:detN} and \eqref{eq:constraints}, we can extend Table \ref{table:smallwisdeprimes} to contain the same data for primes $22,000 \leq p \leq 265,207$ in Table \ref{table:mediumwisdeprimes}.

\begin{table}[h] 
\caption{Primes between $22,000$ and $265,207$ such that there exists a supersingular elliptic curve $E$ defined over $\overline{\mathbb{F}}_p$ with $\delta_E = \lfloor \sqrt[3]{\frac{p}{2}} \rfloor$}
\begin{tabular}{ c | l }
$\delta_E = \lfloor \sqrt[3]{\frac{p}{2}} \rfloor$ & $p$ \\
\hline
22 & 24197 \\
23 & 27277, 27527,  27529\\
24 & 31079, 31151 \\
25 & 34919, 35023 \\
26 & 38557, 39181, 39313 \\
27 & 42197, 43037 \\
28 & -- \\
29 & 52051, 53731, 53849 \\
30 & 59369 \\
31 & -- \\
32 & 70423, 70687, 71647 \\
33 & 77351, 78437, 78439 \\
34 & 84421 \\
35 & 91621, 91909, 92761, 93131, 93133\\
36 & 101051
\end{tabular}
\quad
\begin{tabular}{ c | l }
$\delta_E = \lfloor \sqrt[3]{\frac{p}{2}} \rfloor$ & $p$ \\
\hline
37 & 108187 \\
38 & 116689, 118369\\
39 & 124601, 127241 \\
40 & -- \\
41 & 144439, 147799\\
42 & 157037 \\
43 & -- \\
44 & -- \\
45 & 193799, 194443 \\
46 & 205297 \\
47 & 218737 \\
48 & 232751, 234959\\
49 & 249749 \\
50 & 264949\\
\phantom{} & 
\end{tabular}

\label{table:mediumwisdeprimes}
\end{table}

\subsection{Computational methods}\label{sec:compmethods}

To calculate $\mathbb{Z}$-bases for maximal orders in $B_{p,\infty}$, we use the algorithm given by Kirschmer and Voight~\cite{KV10} and available in Magma~\cite{magma}. For the prime $p = 263,429$, which is one of the largest prime for which we performed this computation, the computation for a basis for each maximal order in $B_{263429, \infty}$ took 6885.410 seconds (1 hour and 54 minutes) on an Apple computer with M2 chip and 8 GB of RAM using Magma V2.29-2 on macOS Tahoe. To complete the computations of bases of quaternionic maximal orders presented in this article, we used the Ai'a server supported by the GAATI and GEPASUD laboratories at the Universit\'{e} de Polyn\'{e}sie fran\c{c}aise, all of whose technical specifications can be found at \url{gaati.org/aia}. This server ran Magma V2.28-23 at the time of our computations.

Once we obtained a basis for each maximal order $\mathcal{O}$, we computed a successive minimal basis for the Gross lattice $\mathcal{O}^T$, as well as its Gram matrix using code written by Korpal for \cite{HKTV} and available at \cite{HKTVrepo}. To compute this successive minimal basis, this code applies an implementation of Eisenstein reduction~\cite[Theorem 103]{Dickson} written by Rama and available within \texttt{SageMath}\footnote{\url{https://github.com/sagemath/sage/blob/develop/src/sage/quadratic_forms/ternary_qf.py}}. Eisenstein reduction yields a successive minimal basis for lattices of rank $3$ (the only case under consideration in this article) since an Eisenstein-reduced basis gives a reduced fundamental parallelepiped by \cite[pp.\ 162-163]{Dickson}, and therefore such a basis can shown to be Minkowski-reduced \cite{Minkowski}. Finally, in dimension smaller than $4$, a Minkowski-reduced basis attains the successive minima of a lattice by \cite{vanderWaerden}.

Computing the Gram matrix of the basis we obtain, allows us to determine values $D_1, D_2, D_3, t_{12}, t_{13}$ and $t_{23}$ which in turn give us the quadratic form $Q$ of equation \eqref{eq:Q}, from which we can compute the quadratic form $Q' = \frac{Q}{4p}$. Again we may use Eisenstein reduction to obtain a successive minimal basis of the module $(\mathcal{R},Q')$ of Section \ref{sec:sieving}, and the first successive minimum $N_1$ is the least degree $\delta_E$ of an isogeny from $E$ to $E^{(p)}$ for the isomorphism class(es) of elliptic curve(s) with endomorphism ring isomorphic to $\mathcal{O}$. This last step is done using our own functions written using \texttt{SageMath}~\cite{sagemath}, and our code is available for review on GitHub at~\cite{repo}. 

Again for the prime $p = 263,429$, once the bases of every maximal order of $B_{263429, \infty}$ had been computed, on the same Apple computer as above it took less than 35 seconds to compute the value of $\delta_E$ for all of the supersingular elliptic curves $E$ defined over $\overline{\mathbb{F}}_{263429}$.

Sieving for primes $p$ for which there may be a supersingular elliptic curve $E$ defined over $\overline{\mathbb{F}}_p$ with $\delta_E = \lfloor \sqrt[3]{\frac{p}{2}}  \rfloor$ is done by a brute-force search implemented in Magma and also available on GitHub at~\cite{repo}. Once more, on the Apple computer described above, for the prime $p = 265,231$ the brute-force search took 1.6 seconds, which is a worst-case scenario as there is no solution (and hence the calculation does not break before checking every possible integer satisfying the constraints of \eqref{eq:constraints}).
 
To show how useful the sieving step is, we have checked for the existence of integer solutions to equation \eqref{eq:detN} and constraints \eqref{eq:constraints} for all primes $p \leq 1,351,019$. There are $103,619$ such primes, and we have found $10,898$ primes satisfying the necessary condition. Thus this condition reduces the search space by approximatively $90\%$. We have computed the values of $\delta_E$ for every supersingular elliptic curve defined over $\overline{\mathbb{F}}_p$ for $2193$ of these primes (see Section \ref{sec:extent} for a more precise description of these primes). Of these, $45$ primes did in fact have a supersingular elliptic curve with $\delta_E = \lfloor \sqrt[3]{\frac{p}{2}}  \rfloor$, as listed in Table \ref{table:mediumwisdeprimes} above.

\subsection{Extent of the calculations}\label{sec:extent}

In the preparation of this manuscript, we have computed $\delta_E$ for every supersingular elliptic curve defined over $\overline{\mathbb{F}}_p$ for the following ranges of primes:
\begin{enumerate}
\item every prime $p \leq 22,000$;
\item every prime $93,312 \leq p \leq 101,306$, corresponding to every prime such that $\left \lfloor \sqrt[3]{\frac{p}{2}} \right \rfloor = 36$;
\item finally, for primes $22,000 \leq p \leq 93,312$ and $101,306 \leq p \leq 265,207$, for every prime for which there is an integer solution to equation \eqref{eq:detN} satisfying the conditions given in \eqref{eq:constraints}.
\end{enumerate}

The set of primes $p$ such that $p \leq 22,000$ and there is a supersingular elliptic curve $E$ defined over $\overline{\mathbb{F}}_p$ with $\delta_E= \left \lfloor \sqrt[3]{\frac{p}{2}} \right \rfloor$ is given in Table \ref{table:smallwisdeprimes} above. For the primes in the range $93,312 \leq p \leq 101,306$, there is a single prime such that there is a supersingular elliptic curve defined over $\overline{\mathbb{F}}_p$ with $\delta_E = \left \lfloor \sqrt[3]{\frac{p}{2}} \right \rfloor$. This is $p = 101,051$ and one of the two curves defined over $\overline{\mathbb{F}}_{101051^2}$ with $\delta_E = 36$ is given in Section \ref{sec:example}; since this curve has $j$-invariant outside of $\mathbb{F}_{101051}$, both this curve and its Galois conjugate satisfy $\delta_E = 36$ and correspond to a unique maximal order of $B_{101051,\infty}$ attaining the bound. Finally, Table \ref{table:mediumwisdeprimes} contains all primes $22,000 \leq p \leq 265,207$ such that there is a supersingular elliptic curve $E$ defined over $\overline{\mathbb{F}}_p$ with $\delta_E= \left \lfloor \sqrt[3]{\frac{p}{2}} \right \rfloor$, including $p = 101,051$.

In total, there are $23,247$  primes less than or equal to $265,207$, and $2610$ of these primes admit integer solutions to \eqref{eq:detN} satisfying the constraints \eqref{eq:constraints}. Overall, using both the sieve and exhaustive computations, we found that the number of primes $p$ less than or equal to $265,207$ such that there is a supersingular elliptic curve $E$ defined over $\overline{\mathbb{F}}_p$ with $\delta _E = \lfloor \sqrt[3]{\frac{p}{2}}  \rfloor$ is equal to 119. Hence approximately $0.5\%$ of primes less than or equal to $265,207$ admit a supersingular elliptic curve attaining the bound given in this article.

\section{Data analysis and conjectures}\label{sec:data}

Motivated by the work above, it is natural to wish to study the primes $p$ such that there is a supersingular elliptic curve $E$ defined over $\overline{\mathbb{F}}_p$ with $\delta_E = \lfloor \sqrt[3]{\frac{p}{2}} \rfloor$. To this end, in Section \ref{sec:defdata} we define the quantity $\delta(p)$ for $p$ a prime and present the data we have obtained on this quantity. This naturally leads us to the definition of elliptic curves \emph{without inseparable small-degree endomorphisms}, and call the primes $p$ for which there exist such curves defined over $\overline{\mathbb{F}}_p$ \emph{wisde primes}. Finally in Section \ref{sec:conjectures} we present conjectures on the functions $\delta_E$ and $\delta(p)$ and on the finiteness of the set of wisde primes 
that seem supported by the data we have obtained.

\subsection{Definitions and data} \label{sec:defdata}

To lighten our prose, let us introduce the following two definitions:
\begin{definition}\label{def:wisde}
Let $E$ be a supersingular elliptic curve defined over $\overline{\mathbb{F}}_p$. If $\delta_E = \lfloor \sqrt[3]{\frac{p}{2}} \rfloor$, we say that $E$ is \textbf{without inseparable small-degree endomorphisms}, or \emph{wisde} for short. If $p$ is a prime such that there exists a supersingular elliptic curve defined over $\overline{\mathbb{F}}_p$ that is wisde, then we say that $p$ is a \textbf{wisde prime}.
\end{definition}

We will also need:
\begin{definition}
Let $p$ be a prime, we define the value
\begin{equation*}
\delta(p) = \max_{E \text{ ss } /\overline{\mathbb{F}}_p} \delta_E,
\end{equation*}
where the maximum is taken over all isomorphism classes of supersingular elliptic curves defined over $\overline{\mathbb{F}}_p$.
\end{definition}

Therefore $p$ is a wisde prime if and only if $\delta(p) = \lfloor \sqrt[3]{\frac{p}{2}} \rfloor$. In addition, in Section \ref{sec:example} we showed that $p = 101,051$ is a wisde prime with $\lfloor \sqrt[3]{\frac{p}{2}} \rfloor =36$ (in fact, Figure \ref{figure3} shows that it is the only wisde prime with $93,312 \leq p \leq 101,306$), Table \ref{table:smallwisdeprimes} gives a complete list of wisde primes that are less than $22,000$, and Table \ref{table:mediumwisdeprimes} gives a complete list of wisde primes with $22,000 \leq p \leq 93,312$ and $101,306 \leq p \leq 265,207$. We present the data for primes $p \leq 22,000$ as graphs below. From Figures \ref{figure0}, \ref{figure1} and \ref{figure2} or from Table \ref{table:smallwisdeprimes}, we see that there is a wisde prime $p$ with $\lfloor \sqrt[3]{\frac{p}{2}} \rfloor = n$ for every $1 \leq n \leq 21$, except $n = 10, 12, 18$.

\begin{figure}[h]
\caption{Graph of $p$ versus $\delta(p)$ for $p \leq 300$, superimposed with the function $f(x) = \lfloor \sqrt[3]{\frac{x}{2}}\rfloor$.}
\includegraphics[width=3in]{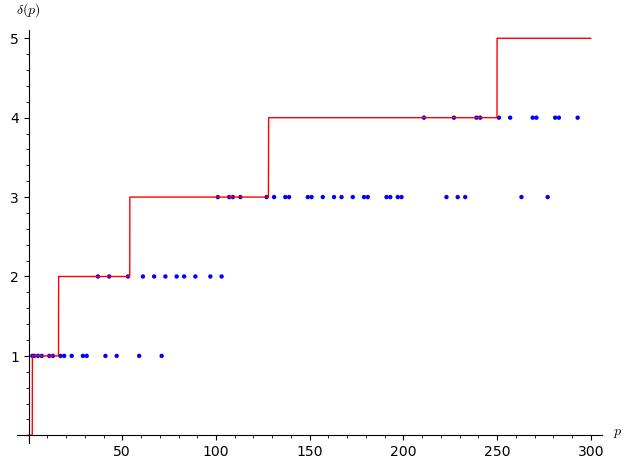}
\label{figure0}
\end{figure}

\begin{figure}[h]
\caption{Graph of $p$ versus $\delta(p)$ for $p \leq 4600$, superimposed with the function $f(x) = \lfloor \sqrt[3]{\frac{x}{2}} \rfloor$.}
\includegraphics[width=3.02in]{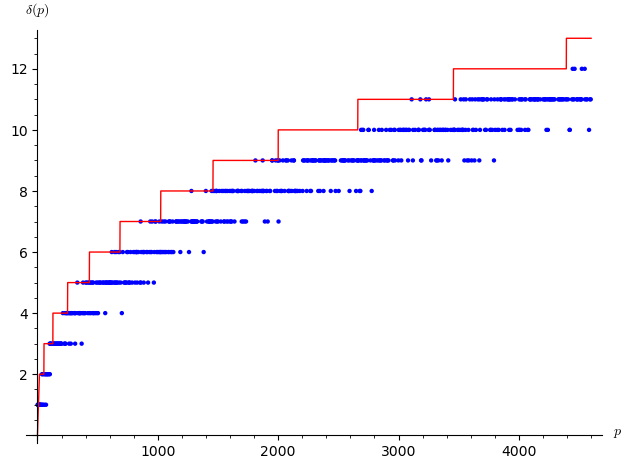}
\label{figure1}
\end{figure}

\begin{figure}
\caption{Graph of $p$ versus $\delta(p)$ for $p \leq 22,000$, superimposed with the function $f(x) = \lfloor \sqrt[3]{\frac{x}{2}}\rfloor$.}
\includegraphics[width=3.02in]{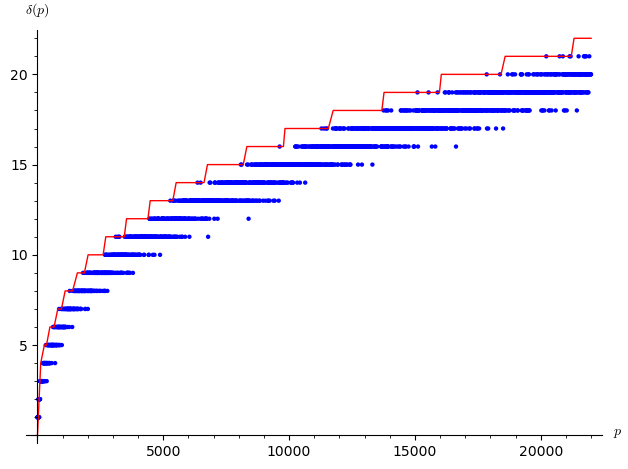}
\label{figure2}
\end{figure}

\subsection{Conjectures}\label{sec:conjectures}

In light of the data we have obtained, we end with five conjectures on the function $\delta$ that we have defined, and which we hope spur more research on the topic.

\begin{conjecture}
Let $\delta$ be the function whose domain is the set $\mathcal{E}$ of isomorphism classes of supersingular elliptic curves defined over $\overline{\mathbb{F}}_p$ for all primes $p$ assigning to an isomorphism class of curves $E$ the value $\delta_E$. Then for each $n \in \mathbb{Z}_{>0}$, there is $E$ with $\delta_E = n$:
\begin{equation*}
\Im \left( \substack{ \delta \colon \mathcal{E} \to \mathbb{Z}_{>0} \\ \quad E \mapsto \delta_E} \right) = \mathbb{Z}_{>0}.
\end{equation*}
In other words, the function $\delta$ on isomorphism classes of elliptic curves is surjective.
\end{conjecture}

In fact, something much stronger is probably the case: For example, fixing the prime $p = 234,959$ for which $\delta(p) = 48 = \lfloor \sqrt[3]{\frac{p}{2}} \rfloor$, we find that for each $1 \leq n \leq 48$ there is $E$ defined over $\overline{\mathbb{F}}_{234959}$ with $\delta_E = n$. However we find that the generalization of such a strong statement (that is, that for each prime $p$ and each $1 \leq n \leq \delta(p)$ there is $E$ defined over $\overline{\mathbb{F}}_p$ with $\delta_E = n$) is false, as witnessed by $p = 101$: We have $\delta(101) = 3$ but no supersingular elliptic curve defined over $\overline{\mathbb{F}}_{101}$ with $\delta_E = 2$. However, we propose the following weaker statement:

\begin{conjecture}\label{conj:weakgap}
For each $n \in \mathbb{Z}_{>0}$, there exists a prime $p$ such that for all $1 \leq m \leq n$, there is a supersingular elliptic curve $E$ defined over $\overline{\mathbb{F}}_p$ with $\delta_E = m$.
\end{conjecture}

This conjecture is verified for each $1 \leq n \leq 48$ by the prime $p = 234,959$, but for small values of $n$ we can find the least prime $p$ for which Conjecture \ref{conj:weakgap} is true, and we list those in Table \ref{table:weakgapconj} below.
\begin{table}[h]
\caption{Least prime $p$ for which Conjecture \ref{conj:weakgap} is verified for $1 \leq n \leq 21$}
\begin{tabular}{ c | l }
$n$ & Least $p$ \\
\hline
1 & 2 \\
2 & 37\\
3 & 107 \\ 
4 & 211\\
5 & 331 \\
6 & 617 \\
7 & 857\\
8 & 1279\\
9 & 1811\\
10 & 2699 \\ 
11 & 3181  
\end{tabular}
\qquad
\begin{tabular}{ c | l }
$n$ & Least $p$ \\
\hline
12 & 4447\\
13 & 5387 \\ 
14 & 6361\\
15 & 8081\\
16 & 10289 \\ 
17 & 11287\\
18 & 13757 \\
19 & 15101 \\
20 & 18691\\ 
21 & 20879 \\ 
\dots & \dots 
\end{tabular}
\label{table:weakgapconj}
\end{table}

We now turn our attention to the values of $\delta$ as a function on primes, rather than a function on isomorphism classes of supersingular elliptic curves.

\begin{conjecture}
\label{conj:deltapsurj}
Let $\delta$ be the function whose domain is the set $\mathbb{P}$ of prime numbers assigning to a prime $p$ the value $\delta(p)$. Then for each $n \in \mathbb{Z}_{>0}$, there is $p\in \mathbb{P}$ with $\delta(p) = n$:
\begin{equation*}
\Im \left( \substack{ \delta \colon \mathbb{P} \to \mathbb{Z}_{>0} \\ \quad p \mapsto \delta(p)} \right) = \mathbb{Z}_{>0}.
\end{equation*}
In other words, the function $\delta$ on primes is surjective.
\end{conjecture}

This conjecture is verified numerically for $1 \leq n \leq 21$, as shown in Figure \ref{figure2}. In addition, Table \ref{table:mediumwisdeprimes} shows that the conjecture is verified for $22 \leq n \leq 50$ except $n = 28, 31, 40, 43, 44.$ For these remaining values, we have that $\delta(51,479) = 28, \, \delta(68,113) = 31,\,  \delta(140,827) = 40,\, \delta(174,389) = 43$ and $\delta(188,011) = 44$, and therefore Conjecture \ref{conj:deltapsurj} is true for all $n \leq 50$.

Now we turn our attention to the set of wisde primes:
\begin{equation*}
\mathcal{W} = \left\{ p \in \mathbb{P} : \delta(p) = \left\lfloor \sqrt[3]{\frac{p}{2}} \right\rfloor \right\}.
\end{equation*}

\begin{conjecture}\label{conj:Winfinite}
The set of wisde primes $\mathcal{W}$ is infinite.
\end{conjecture}

\begin{remark}
Consider the set
\begin{equation*}
\Delta = \Im \left( \substack{ \delta \colon \mathcal{W} \to \mathbb{Z}_{>0} \\ \quad p \mapsto \delta(p)} \right).
\end{equation*}
Conjecture \ref{conj:Winfinite} is equivalent to the conjecture that $\Delta$ is infinite. From the data of Table \ref{table:smallwisdeprimes}, we have that $10, 12, 18 \not \in \Delta$,  
and from the data of Table \ref{table:mediumwisdeprimes}, we have $28, 31, 40, 43, 44 \not \in \Delta$. All other values $1 \leq n \leq 50$ belong to $\Delta$.
\end{remark}

\begin{figure}[h]
\caption{Graph of $p$ versus $\delta(p)$ for $93,312 \leq p \leq 101,306$, superimposed with the function $f(x) = \lfloor \sqrt[3]{\frac{x}{2}}\rfloor$.}
\includegraphics[width=3.02in]{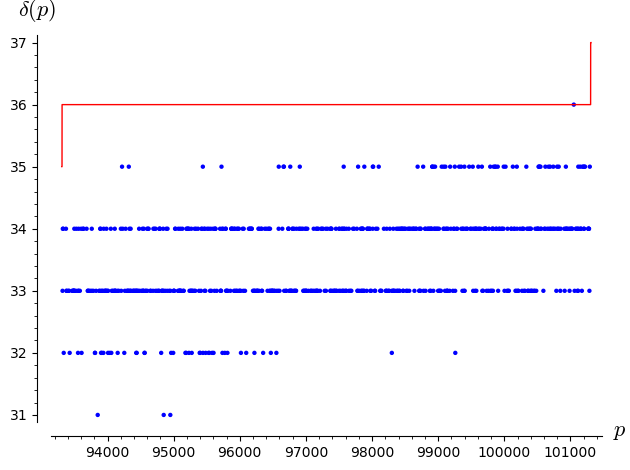}
\label{figure3}
\end{figure}

Finally, the data of Figures \ref{figure2} and \ref{figure3} suggest that $\delta(p)$ remains quite close to the value of $\left \lfloor \sqrt[3]{\frac{p}{2}} \right\rfloor$ as $p$ grows. Indeed for primes $p \leq 22,000$ we have $\left \lfloor \sqrt[3]{\frac{p}{2}} \right\rfloor - \delta(p) \leq 4$, and for primes $93,312 \leq p \leq 101,306$, we have $\left \lfloor \sqrt[3]{\frac{p}{2}} \right\rfloor - \delta(p) \leq 5$. Therefore it seems reasonable to guess that $\left \lfloor \sqrt[3]{\frac{p}{2}} \right\rfloor - \delta(p)$ grows like $\log(p)$:
\begin{conjecture}
We have that
\begin{equation*}
\limsup_{p \to \infty} \frac{\left \lfloor \sqrt[3]{\frac{p}{2}} \right\rfloor - \delta(p)}{\log p} < \infty.
\end{equation*}
\end{conjecture}

\bibliographystyle{amsalpha}
\bibliography{biblio}
\end{document}